\documentclass[10pt,reqno]{amsart}
\usepackage{amsmath,amsthm,amssymb,mathtools,amscd}
\usepackage{latexsym,graphicx,color,indentfirst}
\usepackage{algorithmic}
\usepackage[linesnumbered,boxed,norelsize]{algorithm2e}
\usepackage{fancyhdr}
\usepackage{mathrsfs}
\usepackage{float}
\usepackage{float}
\usepackage{color}
\usepackage{listings}
\usepackage{multirow}
\usepackage{graphicx}
\usepackage{mathrsfs}
\usepackage{ifpdf}
\usepackage{epstopdf}
\usepackage{makecell,rotating,multirow,diagbox}
\usepackage[numbers,sort&compress]{natbib}

\newtheorem{theorem}{Theorem}[section]

\newtheorem{assumption}[theorem]{Assumption}

\newtheorem{remark}[theorem]{Remark}
\newtheorem{example}[theorem]{Example}

\date{\today}

\title[Simultaneously reconstructing potentials and internal sources for FSEs]{Simultaneously reconstructing potentials and internal sources for fractional Schr\"odinger equations}
\author{Xinyan Li$^{1}$}
\address{1.Research Center for Mathematics and Interdisciplinary Sciences, Shandong University, 266237, Shandong, P.R. China}
\email{\tt xinyanli@email.sdu.edu.cn}



\date{}

\begin{document}

\begin{abstract}
The inverse problems about fractional Calder\'on problem and fractional Schr\"odinger equations are of interest in the study of mathematics. In this paper, we propose the inverse problem to simultaneously reconstruct potentials and sources for fractional Schr\"odinger equations with internal source terms. We show the uniqueness for reconstructing the two terms under measurements from two different nonhomogeneous boundary conditions. By introducing the variational Tikhonov regularization functional, numerical method based on conjugate gradient method(CGM) is provided to realize this inverse problem. Numerical experiments are given to gauge the performance of the numerical method.
\end{abstract}

\maketitle

\par\textbf{Keywords: }fractional Schr\"odinger equation; fractional Laplacian; conjugate gradient method; simultaneously; Tikhonov regularization
\section{\bf Introduction}
In the recent years, fractional Calder\'on problem has become a new developing research direction. Fractional Calder\'on problem was first proposed in \cite{GhoSal} to ask to determine the potential term in fractional Schr\"odinger equation from exterior measurements. The uniqueness, stability, optimal stability of fractional Calder\'on problem are proved in \cite{GhoSal,RulSala,RulSalb}, respectively. Further, Ghosh et al. show global uniqueness with a
single measurement in the fractional Calder\'on problem\cite{GhoshJFA}. There are also researches on the generalized form of \cite{GhoSal}, see \cite{GhoLin,LaiLina,LaiLinb,CovMoen,Covi22,BhaGho}, etc. In the real world, usually, the unknown source term may exist inside the object, which means that right-hand side of fractional Schr\"odinger equation may not be zero. In this situation, how to simultaneously determine the potential term and internal source term in fractional Schr\"odinger equation is an interesting problem.

In this paper, we consider the fractional Schr\"odinger equation with internal source
\begin{equation}\label{fseis}
\begin{cases}
((-\Delta)^s+q(x))u(x)=g(x),\quad &x\in\Omega,\\u(x)=f(x),\quad &x\in\Omega_e,
\end{cases}
\end{equation}
where $q\in L^{\infty}(\Omega)$, $g\in L^2(\Omega)$, $s\in(0,1)$, the open bounded set $\Omega\subset\mathbb{R}^n$, $n\geq1$, and $\Omega_e=\mathbb{R}^n\setminus\overline{\Omega}$. In the article, we will use integral fractional Laplacian described as
\begin{equation}\label{def1}
	(-\Delta)^{s}u(x)=c_{n,s}  \text{ P. V. }\int_{\mathbb{R}^n}\frac{u(x)-u(x')}{|x-x'|^{n+2s}}dx',
\end{equation}
where P. V. denotes the principal value integral, and $c_{n,s}$ is a constant depending on dimension $n$ and order $s$. According to \cite{Kwa17}, if $u$ belongs to the Schwartz space, then
\eqref{def1} is equivalent to the definition through Fourier transform introduced in \cite{GhoshJFA}. It will be shown that two combinations of different external sources $f$ and their corresponding external measurements $(-\Delta)^su$ can uniquely determine $q$ and $g$. Numerical method based on conjugate gradient method and numerical experiments are provided. Our conclusion is under the following condition:
\begin{assumption}\label{ass2}
If $u\in H^s(\mathbb{R}^n)=\left\{u \in L^2(\mathbb{R}^n); \int_{\mathbb{R}^n}\left(1+|\xi|^2\right)^s|\widehat{u}(\xi)|^2 d \xi<\infty\right\}$ is the solution of
\begin{equation*}
\left\{\begin{aligned}
((-\Delta)^s+q(x))u(x)=0,\quad &x\in\Omega,\\u=0,\quad &x\in\Omega_e,
\end{aligned}\right.
\end{equation*}
then $u\equiv0$. This is equivalent to the assumption that 0 is not a Dirichlet eigenvalue of $(-\Delta)^s+q$.
\end{assumption}
\begin{remark}
According to \cite[Theorem 4.1.]{SunDen} and the numerical scheme in \cite{Li23}, for $C_{\gamma,\beta}d_i<0,1\leq i\leq N-2$ always holds, we only needs $q_{\min}=\min_{1\leq i\leq N-1}q_i$ satisfies
$$C_{\gamma,s}d_0+q_j>0\text{ and }q_{\min}+\min_{i=1,\cdots,N-1}\sum_{j=1}^{N-1}C_{\gamma,\beta}d_{|i-j|}>0,$$
and then Assumption \ref{ass2} holds numerically.  For example, when $s=0. 4$, $L=1$, $N=128$, we only need $q_{\min}>-17. 9041$ after calculation.
\end{remark}

The outline of the paper is as follows. The uniqueness result of simultaneous reconstruction is given in Section 2. In Section 3, the variational Tikhonov regularization method is provided. Numerical experiments are shown in Section 4. Some concluding remarks are made in Section 5.

\section{\bf Uniqueness of simultaneous reconstruction}
Based on \cite[Lemma 2.3.]{GhoSal}, under Assumption \ref{ass2}, for any $f\in H^s(\mathbb{R}^n)$, $g\in (\tilde{H}^s(\Omega))^*$ with $\tilde{H}^s(\Omega)=\text{closure of }C_c^{\infty}(\Omega)\text{ in }H^s(\mathbb{R}^n)$, there is a unique weak solution $u\in H^s(\mathbb{R}^n)$ in \eqref{fseis}. The existence and uniqueness of weak solution is the basis of our problem. The following theorem illustrates that under two external sources and their corresponding observations, the potential $q$ and the source $g$ in \eqref{fseis} can be simultaneously determined.
\begin{theorem}
Let $u$ satisfy \eqref{fseis}, $\Omega\subset\mathbb{R}^n,n\geq1$ be bounded open set, $0<s<1$,
and $W_1,W_2\subset\Omega_e$ be open sets satisfying $\overline{\Omega}\cap \overline{W_1},\overline{\Omega}\cap \overline{W_2}=\emptyset$. Assume one of the following conditions holds:
\begin{enumerate}
\item $s\in[\frac{1}{4},1)$, $q\in L^{\infty}(\Omega)$, $g\in L^2(\Omega)$,
\item $s\in(0,1)$, $q\in C^0(\bar{\Omega})$, $g\in L^2(\Omega)$,
\end{enumerate}
and further, Assumption \ref{ass2} holds. Then for given $f,\tilde{f}\in\tilde{H}^s(W_1)$ being the boundary condition of
\begin{equation}\label{forw1}
\begin{cases}
((-\Delta)^s+q)u=g\quad &x\in\Omega,\\u=f\quad &x\in\Omega_e,
\end{cases}
\end{equation}
\begin{equation}\label{forw2}
\begin{cases}
((-\Delta)^s+q)\tilde{u}=g\quad &x\in\Omega,\\ \tilde{u}=\tilde{f}\quad &x\in\Omega_e
\end{cases}
\end{equation}
respectively, and satisfying $f-\tilde{f}\neq 0$ on every point at a nonzero measure subset of $W_1$, the potential $q$ and internal source $g$ can be uniquely determined by $\left.(-\Delta)^su\right|_{W_2}$ and $\left.(-\Delta)^s\tilde{u}\right|_{W_2}$.
\end{theorem}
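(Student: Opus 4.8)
The plan is to reduce the simultaneous reconstruction to two results that are already available for the source-free problem: the single-measurement uniqueness for the fractional Calder\'on problem (\cite{GhoshJFA}) and the unique continuation property (UCP) of the fractional Laplacian (\cite{GhoSal}). The crucial observation is that the difference of two solutions sharing the same source annihilates the right-hand side, converting the inverse problem into the homogeneous fractional Schr\"odinger equation. The argument is direct: suppose two pairs $(q_1,g_1)$ and $(q_2,g_2)$ produce the same exterior measurements for both boundary data $f$ and $\tilde f$, and let $u_j,\tilde u_j$ ($j=1,2$) denote the corresponding solutions of \eqref{forw1} and \eqref{forw2}, all well defined in $H^s(\mathbb{R}^n)$ by the well-posedness quoted from \cite{GhoSal} under Assumption \ref{ass2}. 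The goal is to establish $q_1=q_2$ first, and only then $g_1=g_2$.

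First I would recover the potential. Set $w_j := u_j-\tilde u_j \in H^s(\mathbb{R}^n)$. Subtracting \eqref{forw2} from \eqref{forw1} cancels the source, so $w_j$ solves $((-\Delta)^s+q_j)w_j=0$ in $\Omega$ with exterior datum $w_j=f-\tilde f$ on $\Omega_e$, and by linearity of the measurement $(-\Delta)^s w_1|_{W_2}=(-\Delta)^s w_2|_{W_2}$. Thus $w_1$ and $w_2$ are solutions of the source-free fractional Schr\"odinger equation sharing a single exterior datum $f-\tilde f$, which is nonzero precisely because of the hypothesis that $f-\tilde f\neq 0$ on a positive-measure subset of $W_1$, and producing the same exterior measurement on $W_2$. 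Applying the single-measurement uniqueness theorem for the fractional Calder\'on problem under either condition (1) or condition (2) — these being exactly the admissible cases of that theorem, where the $s\in[\tfrac14,1)$ plus $L^\infty$ potential case and the $C^0(\bar\Omega)$ potential case are the two hypotheses under which single-measurement recovery is known — yields $q_1=q_2=:q$ in $\Omega$.

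Next I would recover the source using the now-known $q$. Consider $v := u_1-u_2 \in H^s(\mathbb{R}^n)$. Since $q_1=q_2=q$, one gets $((-\Delta)^s+q)v=g_1-g_2=:h$ in $\Omega$, while $v=f-f=0$ on $\Omega_e$ (in particular on $W_2$), and the matching measurements give $(-\Delta)^s v|_{W_2}=0$. Hence both $v$ and $(-\Delta)^s v$ vanish on the nonempty open set $W_2$. The UCP for the fractional Laplacian then forces $v\equiv 0$ throughout $\mathbb{R}^n$, whence $h=((-\Delta)^s+q)v=0$, i.e. $g_1=g_2$ in $\Omega$, completing the uniqueness.

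The genuinely deep input is the single-measurement fractional Calder\'on uniqueness used in the first step; the remainder is the difference bookkeeping together with the already-established UCP. The main point to verify carefully is therefore that the hypotheses of that theorem are actually met — in particular that conditions (1) and (2) coincide with its admissible regularity/range cases, and that the exterior datum $f-\tilde f$ is nontrivial, which is guaranteed by the positive-measure assumption on $W_1$. A secondary check is that Assumption \ref{ass2} and the well-posedness from \cite{GhoSal} indeed make all four solutions $u_j,\tilde u_j$ well defined, so that $w_j$ and $v$ lie in $H^s(\mathbb{R}^n)$ and both cited theorems apply verbatim.
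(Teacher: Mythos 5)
Your proposal is correct and follows essentially the same route as the paper's proof: subtracting the two boundary data to cancel the source and invoking the single-measurement uniqueness of \cite{GhoshJFA} to recover $q$, then subtracting the two potentials' solutions for the same datum $f$ and applying the unique continuation result of \cite{GhoSal} to conclude $g_1=g_2$. Your $w_j$ and $v$ are exactly the paper's differences $u-\tilde{u}$, $\bar{u}-\tilde{\bar{u}}$ and $u-\bar{u}$, so there is nothing further to reconcile.
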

\begin{proof}
Suppose two pairs of $q$ and $g$ denoting by $q_1,g_1$ and $q_2,g_2$ satisfy \eqref{fseis}. Then for external source $f$, there is
\begin{equation}\label{si1}
\begin{cases}
((-\Delta)^s+q_1)u=g_1\quad &x\in\Omega,\\u=f\quad &x\in\Omega_e,
\end{cases}
\end{equation}
\begin{equation}\label{si2}
\begin{cases}
((-\Delta)^s+q_2)\bar{u}=g_2\quad &x\in\Omega,\\ \bar{u}=f\quad &x\in\Omega_e,
\end{cases}
\end{equation}
with the observations $\left.(-\Delta)^su\right|_{W_2}=\left.(-\Delta)^s\bar{u}\right|_{W_2}$. Similarly, for the other external source $\tilde{f}$, the the two unknown pairs satisfy
\begin{equation}\label{si3}
\begin{cases}
((-\Delta)^s+q_1)\tilde{u}=g_1\quad &x\in\Omega,\\ \tilde{u}=\tilde{f}\quad &x\in\Omega_e,
\end{cases}
\end{equation}
\begin{equation}\label{si4}
\begin{cases}
((-\Delta)^s+q_2)\tilde{\bar{u}}=g_2\quad &x\in\Omega,\\ \tilde{\bar{u}}=\tilde{f}\quad &x\in\Omega_e,
\end{cases}
\end{equation}
respectively with $\left.(-\Delta)^s\tilde{u}\right|_{W_2}=\left.(-\Delta)^s\tilde{\bar{u}}\right|_{W_2}$.

Subtract \eqref{si3} from \eqref{si1}, and \eqref{si4} from \eqref{si2}, we respectively have
\begin{equation}\label{si5}
\begin{cases}
((-\Delta)^s+q_1)(u-\tilde{u})=0\quad &x\in\Omega,\\ u-\tilde{u}=f-\tilde{f}\quad &x\in\Omega_e,
\end{cases}
\end{equation}
\begin{equation}\label{si6}
\begin{cases}
((-\Delta)^s+q_2)(\bar{u}-\tilde{\bar{u}})=0\quad &x\in\Omega,\\ \bar{u}-\tilde{\bar{u}}=f-\tilde{f}\quad &x\in\Omega_e.
\end{cases}
\end{equation}
From \cite[Theorem 1.]{GhoshJFA}, $q$ can be uniquely determined from one combination of external source $f-\tilde{f}\in \tilde{H}^s(W_1)$ and observation $\left.(-\Delta)^s(u-\tilde{u})\right|_{W_2}=\left.(-\Delta)^s(\bar{u}-\tilde{\bar{u}})\right|_{W_2}$. Thus we obtain $q_1=q_2$. In the following proof, we will substitute $q_1$ and $q_2$ with $q$.

Further, subtract \eqref{si2} from \eqref{si1}, we have
\begin{equation}\label{si7}
\begin{cases}
((-\Delta)^s+q)(u-\bar{u})=g_1-g_2\quad &x\in\Omega,\\ u-\bar{u}=0\quad &x\in\Omega_e.
\end{cases}
\end{equation}
For both $u-\bar{u}$ and $\left.(-\Delta)^s(u-\bar{u})\right|_{W_2}$ are zero on the open set $W_2$, according to \cite[Theorem 1.2.]{GhoSal}, it holds that $u-\bar{u}\equiv0$, and then $g_1-g_2\equiv0$, $g_1=g_2$.
\end{proof}
\section{\bf Variational Tikhonov regularization method}
In this section, we give a numerical method to reconstruct the potential and internal source in \eqref{fseis} simultaneously. When $\Omega\subset\mathbb{R}$ satisfies strong local Lipschitz condition, it holds that $H^1(\Omega)\hookrightarrow L^{p}(\Omega),2\leq p\leq\infty$. For $f\in C_c^{\infty}(\Omega_e)$, $\mathrm{supp}(f)\subset W_1$, $W_1,W_2\subset\Omega_e$, $\overline{W_1}\cap\overline{\Omega}=\phi\,$, and $\overline{W_2}\cap\overline{\Omega}=\phi\,$, one can prove $\left. (-\Delta)^su\right|_{W_2}\in L^2(W_2)$ by verifying $\left. (-\Delta)^s(u-f)\right|_{W_2}\in L^2(W_2)$ and $\left. (-\Delta)^sf\right|_{W_2}\in L^2(W_2)$ respectively. Hence we could introduce the forward operator
$$F:H^1(\Omega)\times H^1(\Omega)\to L^2(W_2)\times L^2(W_2),(q,g)\mapsto((-\Delta)^su,(-\Delta)^s\tilde{u}).$$
In the following, we will assume $\Omega\subset\mathbb{R}$ is a bounded open set, $0<s<1$, $\Omega_e=\mathbb{R}\setminus\overline{\Omega}$, $W_1,W_2\subset\Omega_e$ are open sets and $\overline{\Omega}\cap \overline{W_1},\overline{\Omega}\cap\overline{W_2}=\emptyset\,$, $q$, $g$, $u$ satisfy the equation \eqref{fseis}, and the observation $\mathbf{h}^{\delta}(x)=(h^{\delta}(x),\tilde{h}^{\delta}(x))\in L^2(W_2)\times L^2(W_2)$ satisfies
$$\|\mathbf{h}^{\delta}(x)-F(q,g)\|_{L^2(W_2)}\leq\delta.$$
Moreover, a Tikhonov regularization functional is introduced by
\begin{equation*}
J(q,g)=\frac{1}{2}||F(q,g)-\mathbf{h}^{\delta}||_{L^2(W_2)}^2+\frac{\alpha}{2}||(q',g')||^2_{L^2(\Omega)},
\end{equation*}
where $\alpha$ is the regularization parameter, and $||(q',g')||^2_{L^2(\Omega)}$ is defined by $\int_{\Omega}( q')^2dx+\int_{\Omega}( g')^2dx$. Then solving the inverse problem is converted to solve the variational problem
\begin{equation*}
(q_{\alpha}^{\delta},g_{\alpha}^{\delta})=\underset{q\in H^1(\Omega),g\in H^1(\Omega)}{\arg\min}J(q,g).
\end{equation*}
\begin{remark}
By using similar methods in \cite{DinZhe,SunWei,ehn96}, the proof of the existence of the minimizer $(q_{\alpha}^{\delta},g_{\alpha}^{\delta})$ can be obtained. Nevertheless, it is hard for us to seek out the minimizer owing to machine precision, which can be handled by setting appropriate stopping rule to let $J(q_{\alpha}^{\delta})$ be closer to $\inf J(q)$.
\end{remark}
Here, we adopt conjugate gradient method to search the minimizer of $J(q,g)$, and the most crucial point is to find the descent gradient of $J(q,g)$. For simplicity, we restrict the admissible set to $Q_1=\{q\in L^2(\Omega):\|q\|_{H^1(\Omega)}<\infty,q=0\text{ near }\partial\Omega\}$.
Suppose $u_{q+\delta q,g+\delta g}-u_{q,g}=(u_q',u_g')\cdot(\delta q,\delta g)+o(\delta q+\delta g)$, $(\delta q,\delta g)\in Q_1\times Q_1$ and denote $\omega =(u_q',u_g')\cdot(\delta q,\delta g)$, where "$\cdot$" denotes inner product. The term $\tilde{\omega}$ can be defined likewise, and $\omega,\tilde{\omega}$ satisfy\\
\textbf{Sensitive Problems:}
\begin{equation}\label{sp11}
\left\{
\begin{aligned}
 ((-\Delta)^s+q(x))\omega(x)&=-\delta q\cdot u_{q,g}+\delta g, &x\in\Omega,\\
\omega(x)&=0, &x\in\Omega_e,
\end{aligned}
\right.
\end{equation}
\begin{equation}\label{sp21}
\left\{
\begin{aligned}
 ((-\Delta)^s+q(x))\tilde{\omega}(x)&=-\delta q\cdot \tilde{u}_{q,g}+\delta g, &x\in\Omega,\\
\tilde{\omega}(x)&=0, &x\in\Omega_e
\end{aligned}
\right.
\end{equation}
separately. Suppose that $v_{q,g}$, $\tilde{v}_{q,g}$ satisfy the following equations\\
\textbf{Adjoint Problems:}
\begin{equation}\label{ap11}
\left\{
\begin{aligned}
 ((-\Delta)^s+q(x))v(x)&=0, &x\in\Omega,\\
v(x)&=(-\Delta)^s\left.u_{q,g}\right|_{W_2}-h^{\delta}, &x\in\Omega_e,
\end{aligned}
\right.
\end{equation}
\begin{equation}\label{ap21}
\left\{
\begin{aligned}
 ((-\Delta)^s+q(x))\tilde{v}(x)&=0, &x\in\Omega,\\
\tilde{v}(x)&=(-\Delta)^s\left.\tilde{u}_{q,g}\right|_{W_2}-h^{\delta}, &x\in\Omega_e
\end{aligned}
\right.
\end{equation}
separately.
For one can notice that
\begin{equation}
\begin{aligned}
&(-q\omega-\delta q\cdot u_q+\delta g,v_q)_{\Omega}+((-\Delta)^s\left. \omega\right|_{W_2},(-\Delta)^s\left. u_q\right|_{W_2}-h^{\delta})_{W_2}\\
=&((-\Delta)^s\omega,v_q)_{\mathbb{R}^n}=(\omega,(-\Delta)^sv_q)_{\mathbb{R}^n}=(\omega,-qv_q)_{\Omega},
\end{aligned}
\end{equation}
the gradient of $J(q,g)$ can be obtained as
\begin{equation}\label{gradq1}
\mathbf{J}_{q,g}'=(u_{q,g}v_{q,g}+\tilde{u}_{q,g}\tilde{v}_{q,g}-\alpha q'',-v_{q,g}-\tilde{v}_{q,g}- \alpha g'').
\end{equation}

We use conjugate gradient method to search the minimizer of the functional $J(q,g)$. The CGM is efficient to deal with variational problems and has been widely used on various inverse problems\cite{DinZhe,SunWei,ZheDin}. Assume that $q^k$, $g^k$ are the $k$-th iteration approximate solution of $q(x)$ and $g(x)$. Then the updating formula of $q^k$ and $g^k$ is
$$(q^{k+1},g^{k+1})=(q^k,g^k)+\beta^k\mathbf{d}^k,$$
where $\beta^k$ is the step size, $\mathbf{d}^k$ is the descent direction in the $k$-th iteration and
\begin{equation}\label{descq1}
\mathbf{d}^k=-\mathbf{J}_{q^k,g^k}'+\gamma^k\mathbf{d}^{k-1},\mathbf{d}^0=-\mathbf{J}_{q^0,g^0}',
\end{equation}
where $\gamma^k$ is the conjugate coefficient calculated by
\begin{equation}\label{conjq1}
\gamma^k=\frac{||\mathbf{J}_{q^k,g^k}'||_{L^2(\Omega)}^2}{||\mathbf{J}_{q^{k-1},g^{k-1}}'||_{L^2(\Omega)}^2}, \gamma^0=0.
\end{equation}
The step size $\beta^k$ is given by
{\footnotesize{\begin{equation*} \hspace{-8mm}
\beta^k=\frac{\int_{W_2}((-\Delta)^su_{q^k,g^k}(x)-h^{\delta}(x))(-\Delta)^s\omega^k+((-\Delta)^s \tilde{u}_{q^k,g^k}(x)-\tilde{h}^{\delta}(x))(-\Delta)^s\tilde{\omega}^kdx+\alpha(((q^k)',(g^k)')\cdot (\mathbf{d}^k)')_{L^2(\Omega)}} {\int_{W_2}((-\Delta)^s\omega^k)^2+((-\Delta)^s\tilde{\omega}^k)^2dx+\alpha( (\mathbf{d}^k)', (\mathbf{d}^k)')_{L^2(\Omega)}},
\end{equation*}}}
such that $\frac{\partial\mathbf{J}}{\partial\beta^k}\approx0$, where we denote $((u_1,v_1),(u_2,v_2))_{L^2(\Omega)}=\int_{\Omega}u_1u_2dx+\int_{\Omega}v_1v_2dx$. The iteration steps are given by
\begin{enumerate}
\item Initialize $q^0,g^0$, and set $k=0$, $\mathbf{d}^{0}=-\mathbf{J}_{q^0,g^0}'$;
\item Solve the forward problems (\ref{forw1}),(\ref{forw2}), where $q=q^k,g=g^k$, and determine the residuals $(-\Delta)^s\left.u\right|_{W_2}-h^{\delta}$,$(-\Delta)^s\left.\tilde{u}\right|_{W_2}-\tilde{h}^{\delta}$;
\item Solve the adjoint problems (\ref{ap11}), (\ref{ap21}), and determine the gradient $\mathbf{J}_{q^k,g^k}'$ through (\ref{gradq1});
\item Calculate the conjugate coefficient $\gamma^k$ by \eqref{conjq1} and the descent direction by (\ref{descq1});
\item Solve the sensitive problems (\ref{sp11}), (\ref{sp21}) and obtain $\omega^k$, $\tilde{\omega}^k$ with $(\delta q,\delta g)=\mathbf{d}^k$;
\item Using the step size updating formula below \eqref{conjq1}, update the step size $\beta^k$;
\item Update $q^k$, $g^k$ by $(q^{k+1},g^{k+1})=(q^k,g^k)+\beta^k\mathbf{d}^k$;
\item Increase $k=k+1$, return to step (2), and repeat the above procedure until a stopping criterion is satisfied.
\end{enumerate}

\section{\bf Numerical experiments}
Let the observation be
\begin{equation*}
\mathbf{h}^{\delta}=(\left. (-\Delta)^su\right|_{W_2},\left. (-\Delta)^s\tilde{u}\right|_{W_2})+\delta (2rand(N)-1,2rand(N)-1),
\end{equation*}
where $N$ is the number of discrete points in $W_2$. In the following examples, we use high-precision grids to calculate the observation and solve the fractional Schr\"odinger equations by methods in \cite{Li23,DuoWyk}.
\begin{example}\label{ex1si}
Let $q(x) = \sin(x), g(x)=\cos(x) ,\Omega=(-1,1),s=0.4$, $W_2=(-3,-1-\epsilon)\cup(1+\epsilon,3)$, $0<\epsilon\ll1$, $W_1=W_2$, $f, \tilde{f}$ be $1$ and $e^{-x^2}$ polished function on $W_1$ respectively, and the stopping rule in the iteration algorithm is
\begin{equation*}
E_{k+1}=||F(q^k,g^k)-\mathbf{h}^{\delta}||_{L^2(W_2)}^2\leq4\times\delta^2.
\end{equation*}
We choose $\alpha=\delta^2$, Figure \ref{qg}(a)-(b) report the numerical results of $q$ and $g$, where we can see that when $\delta$ increases, the reconstruction result will get worse.
\begin{figure}[htbp] 
\begin{minipage}[b]{.48\linewidth}
  \centering
  \centerline{\includegraphics[width=6.0cm]{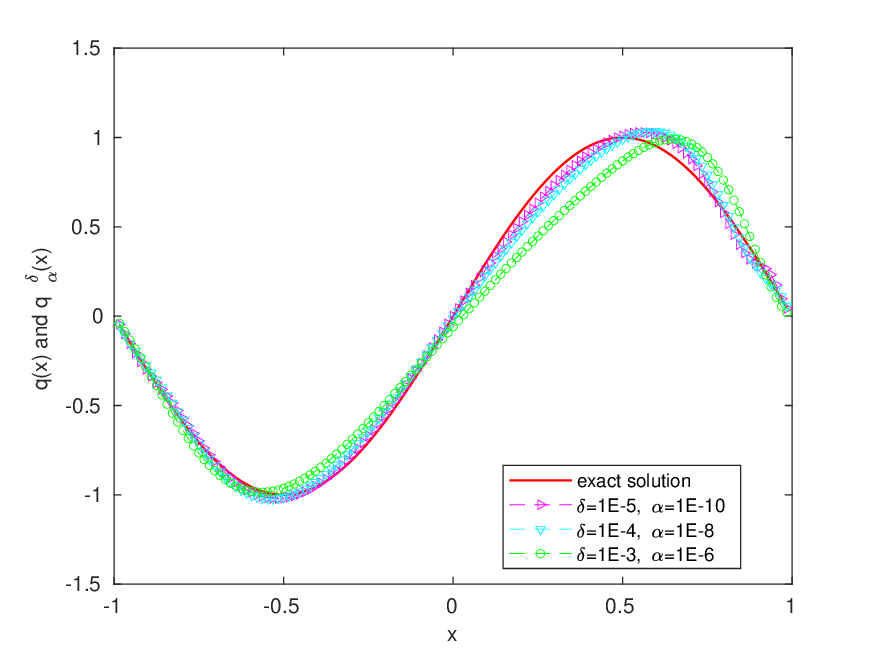}}
  \centerline{(a)}\label{qg}\medskip
\end{minipage}
\hfill              
\begin{minipage}[b]{0.48\linewidth}
  \centering
  \centerline{\includegraphics[width=6.0cm]{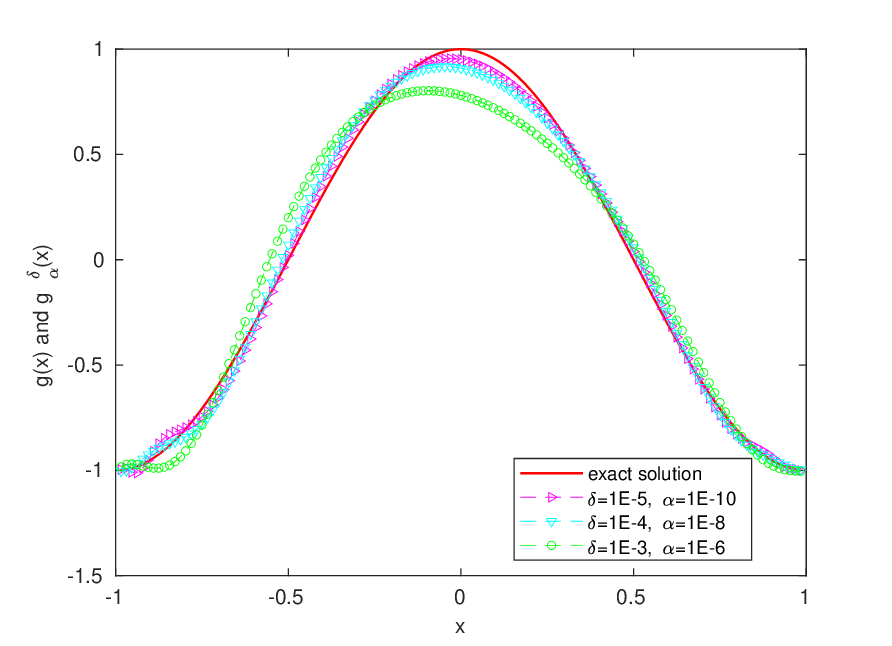}}
  \centerline{(b)}
  \label{gq}\medskip
\end{minipage}
\caption{The numerical results for Example \ref{ex1si} with different noise levels, (a) the reconstruction of $q$; (b) the reconstruction of $g$.}
\end{figure}
\end{example}
\begin{example}\label{ex2si}
Let $q(x) = 1-x^2, g(x)=1-x^4 ,\Omega=(-1,1),s=0.4$,  $W_2=(-3,-1-\epsilon)\cup(1+\epsilon,3)$, $0<\epsilon\ll1$, $W_1=W_2$, $f, \tilde{f}$ be $1$ and $e^{-x^2}$ polished function on $W_1$ respectively, and the stopping rule in the iteration algorithm is
\begin{equation*}
E_{k+1}=||F(q^k,g^k)-\mathbf{h}^{\delta}||_{L^2(W_2)}^2\leq40\times\delta^2.
\end{equation*}
We choose $\alpha=\delta^2$, Figure \ref{qg2}(a)-(b) show the reconstruction results of $q$ and $g$, from which we can see that when $\delta$ increases, the reconstruction result will become untruthful.
\begin{figure}[htbp] 
\begin{minipage}[b]{.48\linewidth}
  \centering
  \centerline{\includegraphics[width=6.0cm]{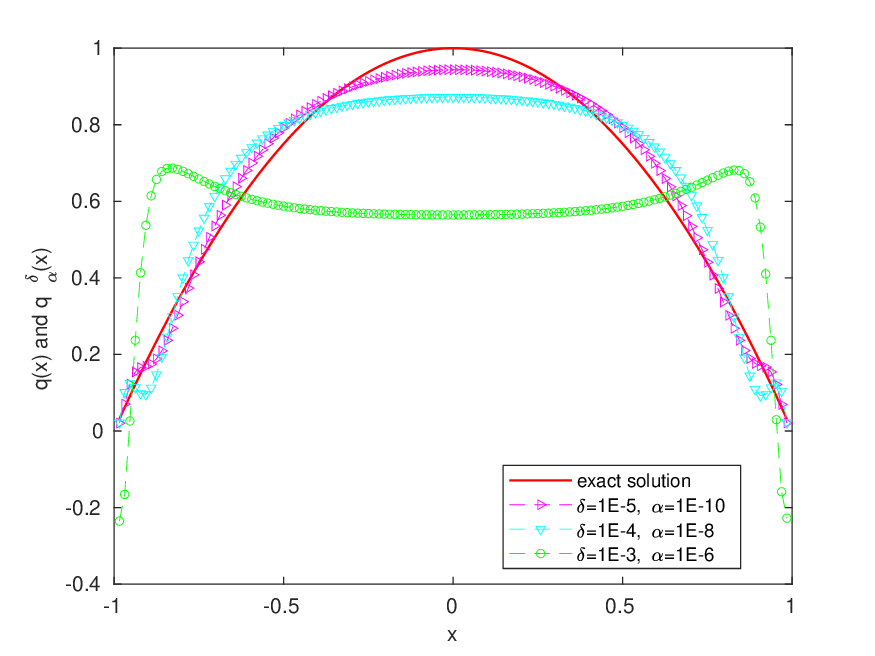}}
  \centerline{(a)}\label{qg2}\medskip
\end{minipage}
\hfill              
\begin{minipage}[b]{0.48\linewidth}
  \centering
  \centerline{\includegraphics[width=6.0cm]{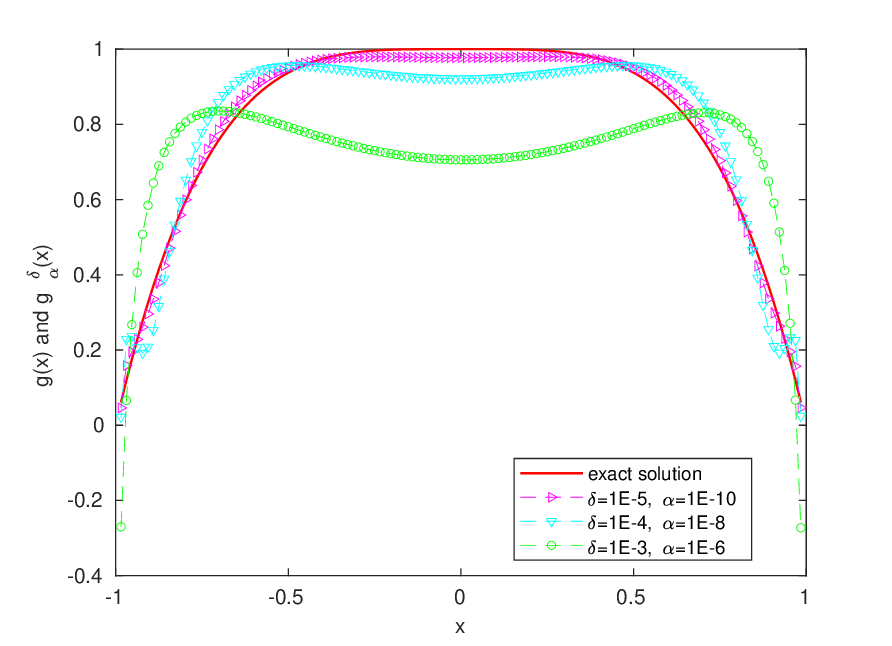}}
  \centerline{(b)}
  \label{gq2}\medskip
\end{minipage}
\caption{The numerical results for Example \ref{ex2si} with different noise levels, (a) the reconstruction of $q$; (b) the reconstruction of $g$.}
\end{figure}
\end{example}

\section{\bf Conclusions}
The paper investigates simultaneous reconstruction of potentials and sources for fractional Schr\"odinger equations. We obtain the uniqueness for reconstructing these two terms under measurements from two different exterior Dirichlet boundary conditions. The problem can be realized numerially by formulating a variational Tikhonov regularization method, whose solution can be searched by conjugate gradient method. The numerical experiments show the effectiveness of our proposed method.

  \end{document}